\DeclareMathOperator{\Hom}{Hom}
\DeclareMathOperator{\Com}{Com}
\DeclareMathOperator{\1}{id}
\newcommand{\NN}{\mathbb{N}}
\newcommand{\RR}{\mathbb{R}}
\newcommand{\EEnd}{\mathcal End}
\newcommand{\EE}{\mathcal E}
\newcommand{\bul}{\bullet}
\renewcommand{\=}{:=}
\renewcommand{\t}{\otimes}
\renewcommand{\:}{\colon}
\newcommand{\m}{\overset{\circ}{\mu}}
\newtheorem{thm}{Theorem}[section]
 \newtheorem{lemma}[thm]{Lemma}
\theoremstyle{definition}
 \newtheorem{defn}[thm]{Definition}
\theoremstyle{definition}
\theoremstyle{definition}
 \newtheorem{rem}[thm]{Remark}
\numberwithin{equation}{section}
\numberwithin{table}{section}
\begin{document}
\title{\LARGE\bf  Dynamical deformations of three-dimensional \\ Lie algebras in  Bianchi classification over  the \\harmonic oscillator}
\author{\Large Eugen Paal and J\"{u}ri Virkepu 
}
\date{}
\maketitle
\thispagestyle{empty}
\begin{abstract}
Operadic Lax representations for the  harmonic oscillator are used to construct the dynamical deformations of three-dimensional (3D) real Lie algebras in the Bianchi classification. It is shown that the energy conservation of the harmonic oscillator is related to the Jacobi identities of the dynamically deformed algebras. Based on this observation, it is proved  that the dynamical deformations of 3D real Lie algebras in the Bianchi classification over the harmonic oscillator are Lie algebras.
\end{abstract}

\section{Introduction}

In Hamiltonian formalism, a mechanical system is described by the
canonical variables $q^i,p_i$ and their time evolution is prescribed
by the Hamiltonian equations
\begin{equation}
\label{ham} \dfrac{dq^i}{dt}=\dfrac{\partial H}{\partial p_i}, \quad
\dfrac{dp_i}{dt}=-\dfrac{\partial H}{\partial q^i}
\end{equation}
By a Lax representation \cite{Lax68,BBT03} of a mechanical system
one means such a pair $(L,M)$ of matrices (linear operators) $L,M$
that the above Hamiltonian system may be represented as the Lax
equation
\begin{equation}
\label{lax} \dfrac{dL}{dt}= ML-LM
\end{equation}
Thus, from the algebraic point of view, mechanical systems can be
described by linear operators, i.e by  linear maps $V\to V$  of a
vector space $V$. As a generalization of this one can pose the
following question \cite{Paal07}: how can the time evolution of the linear operations (multiplications) $V^{\t n}\to V$ be described?

The algebraic operations (multiplications) can be seen as an example
of the \emph{operadic} variables \cite{Ger}. If an operadic system
depends on time one can speak about \emph{operadic dynamics}
\cite{Paal07}. The latter may be introduced by simple and natural
analogy with the Hamiltonian dynamics. In particular, the time
evolution of the operadic variables may be given by the operadic Lax
equation. In Refs. \cite{PV07,PV08,PV08-1}, low-dimensional binary operadic Lax representations for the harmonic oscillator were constructed.

In the present paper, the operadic Lax
representations for the  harmonic oscillator are used to construct the dynamical deformations of three-dimensional (3D) real Lie algebras in the Bianchi classification.
It is shown that the energy conservation of the harmonic oscillator is related to the Jacobi identities of the dynamically deformed algebras.
Based on this observation, it is proved that the dynamical deformations of 3D real Lie algebras in the Bianchi classification over the harmonic oscillator are Lie algebras.

It turns out that four Lie algebras (I, VII, IX, VIII  from the Bianchi classification) remain undeformed (rigid) and all other ones are deformed. However, it is interesting to note that these remain to be Lie algebras over canonical variables $q,p$.  Namely, four of them
(II$\,^{t}$, IV$\,^{t}$, V$\,^{t}$, VI$\,^{t}$) lead straightforwardly to the Jacobi identity, while in other cases
(VII$^{\,t}_a$, III$_{a=1}^{\,t}$, VI$_{a\neq1}^{\,t}$) satisfy the Jacobi identity with  the energy conservation law.

\section{Endomorphism operad and Gerstenhaber brackets}

Let $K$ be a unital associative commutative ring, $V$ be a unital
$K$-module, and $\EE_V^n\= {\EEnd}_V^n\= \Hom(V^{\t n},V)$
($n\in\NN$). For an \emph{operation} $f\in\EE^n_V$, we refer to $n$
as the \emph{degree} of $f$ and often write (when it does not cause
confusion) $f$ instead of $\deg f$. For example, $(-1)^f\= (-1)^n$,
$\EE^f_V\=\EE^n_V$ and $\circ_f\= \circ_n$. Also, it is convenient
to use the \emph{reduced} degree $|f|\= n-1$. Throughout this paper,
we assume that $\t\= \t_K$.

\begin{defn}[endomorphism operad \cite{Ger}]
\label{HG} For $f\t g\in\EE_V^f\t\EE_V^g$ define the \emph{partial
compositions}
\[
f\circ_i g\= (-1)^{i|g|}f\circ(\1_V^{\t i}\t g\t\1_V^{\t(|f|-i)})
\quad \in\EE^{f+|g|}_V,
         \quad 0\leq i\leq |f|
\]
The sequence $\EE_V\= \{\EE_V^n\}_{n\in\NN}$, equipped with the
partial compositions $\circ_i$, is called the \emph{endomorphism
operad} of $V$.
\end{defn}

\begin{defn}[total composition \cite{Ger}]
The \emph{total composition}
$\bul\:\EE^f_V\t\EE^g_V\to\EE^{f+|g|}_V$ is defined by
\[
f\bul g\= \sum_{i=0}^{|f|}f\circ_i g\quad \in \EE_V^{f+|g|}, \quad |\bul|=0
\]
The pair $\Com\EE_V\= \{\EE_V,\bul\}$ is called the \emph{composition
algebra} of $\EE_V$.
\end{defn}

\begin{defn}[Gerstenhaber brackets \cite{Ger}]
The  \emph{Gerstenhaber brackets} $[\cdot,\cdot]$ are defined in
$\Com\EE_V$ as a graded commutator by
\[
[f,g]\= f\bul g-(-1)^{|f||g|}g\bul f=-(-1)^{|f||g|}[g,f],\quad
|[\cdot,\cdot]|=0
\]
\end{defn}

The \emph{commutator algebra} of $\Com \EE_V$ is denoted as
$\Com^{-}\!\EE_V\= \{\EE_V,[\cdot,\cdot]\}$. One can prove (e.g
\cite{Ger}) that $\Com^-\!\EE_V$ is a \emph{graded Lie algebra}. The
Jacobi identity reads
\[
(-1)^{|f||h|}[f,[g,h]]+(-1)^{|g||f|}[g,[h,f]]+(-1)^{|h||g|}[h,[f,g]]=0
\]

\section{Operadic Lax equation and harmonic oscillator}

Assume that $K\= \RR$ or $K\= \mathbb{C}$ and operations are
differentiable. Dynamics in operadic systems (operadic dynamics) may
be introduced by

\begin{defn}[operadic Lax pair \cite{Paal07}]
Allow a classical dynamical system to be described by the
Hamiltonian system \eqref{ham}. An \emph{operadic Lax pair} is a
pair $(\mu,M)$ of operations $\mu,M\in\EE_V$, such that the
Hamiltonian system  (\ref{ham}) may be represented as the
\emph{operadic Lax equation}
\[
\frac{d\mu}{dt}=[M,\mu]\= M\bul \mu-(-1)^{|M||\mu|}\mu \bul M
\]
The pair $(\mu,M)$ is also called an \emph{operadic Lax representations} of/for Hamiltonian system \eqref{ham}.
In this paper we assume that $|M|=0$.
\end{defn}

\begin{rem}
Evidently the degree constraints $|M|=|\mu|=0$ give rise to the ordinary
Lax equation (\ref{lax}) \cite{Lax68,BBT03}. If $|\mu|\neq 0$ then the Gerstenhaber brackets do not coincide with the usual  commutator 
(see Section \ref{sec:evolution} for details).
\end{rem}

The Hamiltonian of the harmonic oscillator is
\[
H(q,p)=\frac{1}{2}(p^2+\omega^2q^2)
\]
Thus, the Hamiltonian system of the harmonic oscillator reads
\begin{equation}
\label{eq:h-osc} \frac{dq}{dt}=\frac{\partial H}{\partial p}=p,\quad
\frac{dp}{dt}=-\frac{\partial H}{\partial q}=-\omega^2q
\end{equation}
If $\mu$ is a linear algebraic operation we can use the above
Hamilton equations to obtain
\[
\dfrac{d\mu}{dt} =\dfrac{\partial\mu}{\partial
q}\dfrac{dq}{dt}+\dfrac{\partial\mu}{\partial p}\dfrac{dp}{dt}
=p\dfrac{\partial\mu}{\partial
q}-\omega^2q\dfrac{\partial\mu}{\partial p}
 =[M,\mu]
\]
Therefore, we get the following linear partial differential equation
for $\mu(q,p)$:
\begin{equation}
\label{eq:diff}
p\dfrac{\partial\mu}{\partial
q}-\omega^2q\dfrac{\partial\mu}{\partial p}=[M,\mu]
\end{equation}
By integrating \eqref{eq:diff} one can get collections of operations called the
\emph{operadic (Lax representations for) harmonic oscillator}. Since the general solution of the partial differential equations depends on arbitrary functions, these representations are not uniquely determined.

\section{Evolution of binary algebras}
\label{sec:evolution}

Let $A\= \{V,\mu\}$ be a binary algebra with an operation 
$xy\=\mu(x\t y)$, i.e $|\mu|=1$. Assume that $|M|=0$. We require that 
$\mu=\mu(q,p)$ so that $(\mu,M)$ is an operadic Lax pair, i.e the Hamiltonian system \eqref{eq:h-osc} of the harmonic oscillator may be written as  the operadic Lax equation
\[
\dot{\mu}=[M,\mu]\=  M\bul\mu-\mu\bul M,\quad |\mu|=1,\quad |M|=0
\]
Let $x,y\in V$. Assuming that $|M|=0$ and $|\mu|=1$ we have
\begin{align*}
M\bul\mu &=\sum_{i=0}^0M\circ_i\mu
=M\circ_0\mu=M\circ\mu\\
\mu\bul M &=\sum_{i=0}^1\mu\circ_i M =\mu\circ_0 M+\mu\circ_1
M=\mu\circ(M\t\1_V)+\mu\circ(\1_V\t M)
\end{align*}
Thus we can see that since $|\mu|=1$ the Gerstenhaber brackets of 
$\mu$ and $M$ do not coincide with the common commutator bracketing that is used in the case of the ordinary Lax representations. 
Using the above formulae, we have
\[
\dfrac{d}{dt}(xy)=M(xy)-(Mx)y-x(My)
\]
Let $\dim V=n$. In a basis $\{e_1,\ldots,e_n\}$ of $V$,  the
structure constants $\mu_{jk}^i$ of $A$ are defined by
\[
\mu(e_j\t e_k)\=  \mu_{jk}^i e_i,\quad j,k=1,\ldots,n
\]
In particular,
\[
\dfrac{d}{dt}(e_je_k)=M(e_je_k)-(Me_j)e_k-e_j(Me_k)
\]
By denoting $Me_i\=  M_i^se_s$, it follows that
\[
\dot{\mu}_{jk}^i
=\mu_{jk}^sM_s^i-M_j^s\mu_{sk}^i-M_k^s\mu_{js}^i,\quad
i,j,k=1,\ldots, n
\]

\section{3D binary anti-commutative operadic Lax representations for harmonic oscillator}

\begin{lemma}
\label{lemma:harmonic3} Matrices
\[
L\=\begin{pmatrix}
    p & \omega q & 0 \\
    \omega q & -p & 0 \\
    0 & 0 & 1 \\
  \end{pmatrix},\quad
M\=\frac{\omega}{2}
\begin{pmatrix}
    0 & -1 &0\\
1 & 0 & 0\\
0 & 0 & 0
  \end{pmatrix}
\]
give a 3-dimensional Lax representation for the harmonic oscillator.
\end{lemma}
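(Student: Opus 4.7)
The statement reduces to a direct verification that the pair $(L,M)$ satisfies the Lax equation \eqref{lax} for the Hamiltonian system \eqref{eq:h-osc} of the harmonic oscillator. The plan is therefore to compute $dL/dt$ using the Hamilton equations $\dot q = p$, $\dot p = -\omega^2 q$, then compute the commutator $ML - LM$ as ordinary $3\times 3$ matrix products, and finally observe that the two expressions agree identically in $q$ and $p$.

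First, I would differentiate $L$ entrywise. Since only the upper-left $2\times 2$ block depends on time, one gets
\[
\frac{dL}{dt}=
\begin{pmatrix}
\dot p & \omega\dot q & 0\\
\omega\dot q & -\dot p & 0\\
0 & 0 & 0
\end{pmatrix}
=
\begin{pmatrix}
-\omega^2 q & \omega p & 0\\
\omega p & \omega^2 q & 0\\
0 & 0 & 0
\end{pmatrix}.
\]
Next, I would compute $ML$ and $LM$; because $M$ is block diagonal with a vanishing $(3,3)$ entry, the third row and third column of both products vanish, so all that is needed is the $2\times 2$ upper-left block of $[M,L]$. That block calculation is short: $M$ restricted to the top $2\times 2$ block is $\tfrac{\omega}{2}$ times a rotation generator, so it rotates the symmetric traceless $2\times 2$ block $\bigl(\begin{smallmatrix}p & \omega q\\ \omega q & -p\end{smallmatrix}\bigr)$ of $L$ in the expected way.

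Finally, I would compare the two $2\times 2$ blocks; the match of the $(1,1)$, $(1,2)$, $(2,1)$ and $(2,2)$ entries gives exactly the four scalar identities $\dot p=-\omega^2 q$ and $\omega\dot q = \omega p$ (each appearing twice by symmetry), which are precisely the Hamilton equations \eqref{eq:h-osc}. No step is a genuine obstacle: the only thing to watch is the sign and the factor $\omega/2$ in front of $M$, which is there exactly so that the $2$'s produced by the commutator cancel with the $\tfrac{1}{2}$ and the $\omega$'s in $L$ pair correctly with those in $M$ to reproduce $\omega^2$ in the diagonal entries of $\dot L$.
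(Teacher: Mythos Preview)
Your proposal is correct: the lemma is a routine entrywise check, and your computation of $dL/dt$ and $[M,L]$ is accurate (including the role of the factor $\omega/2$). The paper itself does not supply a proof of this lemma at all, treating it as an elementary verification; your argument is exactly the expected one.
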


\begin{defn}
For the harmonic oscillator, define its auxiliary functions $A_\pm$
of canonical variables $p,q$ by
\begin{equation}
\label{eq:def_A}
A_+^2+A_-^2=2\sqrt{2H},\quad
A_+^2-A_-^2=2p,\quad
A_+A_-=\omega q
\end{equation}
\end{defn}

\begin{rem}
Note that the second and the third relations imply the first one in \eqref{eq:def_A}. 
\end{rem}

\begin{thm}[\cite{PV08-1}]
\label{thm:main}
Let $C_{\nu}\in\mathbb{R}$ ($\nu=1,\ldots,9$) be
arbitrary real--valued parameters, such that
\begin{equation}
\label{eq:cond} C_2^2+C_3^2+C_5^2+C_6^2+C_7^2+C_8^2\neq0
\end{equation}
Let $M$ be defined as in Lemma \ref{lemma:harmonic3} and the binary anti-commutative multiplication $\mu:V\otimes V\to V$ is given by coordinates
\begin{equation}\label{eq:theorem}
\begin{cases}
\mu_{11}^{1}=\mu_{22}^{1}=\mu_{33}^{1}=\mu_{11}^{2}=\mu_{22}^{2}=\mu_{33}^{2}=\mu_{11}^{3}=\mu_{22}^{3}=\mu_{33}^{3}=0\\
\mu_{23}^{1}=-\mu_{32}^{1}=C_2p-C_3\omega q-C_4\\
\mu_{13}^{2}=-\mu_{31}^{2}=C_2p-C_3\omega q+C_4\\
\mu_{31}^{1}=-\mu_{13}^{1}=C_2\omega q+C_3p-C_1\\
\mu_{23}^{2}=-\mu_{32}^{2}=C_2\omega q+C_3p+C_1\\
\mu_{12}^{1}=-\mu_{21}^{1}=C_5A_++C_6A_-\\
\mu_{12}^{2}=-\mu_{21}^{2}=C_5A_--C_6A_+\\
\mu_{13}^{3}=-\mu_{31}^{3}=C_7A_++C_8A_-\\
\mu_{23}^{3}=-\mu_{32}^{3}=C_7A_--C_8A_+\\
\mu_{12}^{3}=-\mu_{21}^{3}=C_9
\end{cases}
\end{equation}
Then $(\mu,M)$ is an operadic Lax pair for the harmonic oscillator.
\end{thm}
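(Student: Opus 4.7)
The plan is to reduce the Lax equation $\dot\mu=[M,\mu]$ to the component form derived in Section \ref{sec:evolution},
\[
\dot\mu_{jk}^i = \mu_{jk}^s M_s^i - M_j^s \mu_{sk}^i - M_k^s \mu_{js}^i, \qquad i,j,k\in\{1,2,3\},
\]
and to verify this system componentwise. Since both sides are $\mathbb{R}$-linear in the parameters $C_1,\ldots,C_9$, it suffices to check the contribution of each parameter independently, which reduces the problem to a finite collection of routine identities.

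Two preparatory computations are required. First, from Lemma \ref{lemma:harmonic3} the only nonzero entries of $M$ are $M_1^2=\omega/2$ and $M_2^1=-\omega/2$; all other $M_s^i$ vanish, and in particular $M_3^s=M_s^3=0$. Second, one records the time derivatives of the building blocks in \eqref{eq:theorem}. Hamilton's equations \eqref{eq:h-osc} give $\dot p=-\omega^2 q$ and $\omega\dot q=\omega p$, so $(p,\omega q)$ rotates in the phase plane with angular frequency $\omega$. Differentiating the relations $A_+^2-A_-^2=2p$ and $A_+A_-=\omega q$ from \eqref{eq:def_A} and solving the resulting linear system for $\dot A_{\pm}$ yields
\[
\dot A_+ = -\tfrac{\omega}{2}A_-, \qquad \dot A_- = \tfrac{\omega}{2}A_+,
\]
so $(A_+,A_-)$ rotates with angular frequency $\omega/2$; the parameters $C_1,C_4,C_9$ multiply time-independent quantities.

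With these ingredients in hand, the verification proceeds orbit by orbit. The antisymmetry $\mu_{kj}^i=-\mu_{jk}^i$ halves the count, leaving the ten coordinates of \eqref{eq:theorem}, which by linearity split into four patterns: (i) the constants $C_1,C_4,C_9$, where both sides must vanish; (ii) the four entries governed by $C_2,C_3$, where the frequency-$\omega$ rotation of $(p,\omega q)$ must be reproduced by the commutator with $M$; (iii) the $A_{\pm}$-entries governed by $C_5,C_6$ in $\mu_{12}^i$ and by $C_7,C_8$ in $\mu_{13}^3,\mu_{23}^3$, where the frequency-$\omega/2$ rotation of $(A_+,A_-)$ must reappear on the right. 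In each case the right-hand sum collapses, because only $M_1^2$ and $M_2^1$ contribute, to at most two terms, and a brief algebraic manipulation matches it to $\dot\mu_{jk}^i$. The main obstacle is purely bookkeeping: careful tracking of signs and of which indices lie in $\{1,2\}$, where $M$ acts nontrivially, versus the index $3$, where it does not. No conceptual difficulty is expected beyond the preparatory formulas above; the cleanest way to present the argument is to tabulate the ten equations side by side and check them line by line.
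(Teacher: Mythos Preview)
Your approach is correct and is the natural one: the paper itself does not supply a proof of this theorem but only quotes it from \cite{PV08-1}, and the direct componentwise verification you outline is exactly how such a result is established. Your preparatory computations (the nonzero entries of $M$ and the derivatives $\dot A_+=-\tfrac{\omega}{2}A_-$, $\dot A_-=\tfrac{\omega}{2}A_+$) are right, and the splitting into the three frequency patterns is the efficient way to organise the bookkeeping.

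One small point you leave implicit: condition \eqref{eq:cond} plays no role in the verification that the harmonic oscillator equations \eqref{eq:h-osc} imply $\dot\mu=[M,\mu]$; that implication holds for \emph{all} choices of $C_\nu$. The condition is there to exclude the degenerate case in which $\mu$ is constant in $(q,p)$ (only $C_1,C_4,C_9$ survive), so that the Lax equation genuinely \emph{represents} the oscillator rather than being trivially satisfied. It would be worth adding a sentence to this effect at the end.
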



\section{Initial conditions and dynamical deformations}

It seems attractive to specify the coefficients $C_{\nu}$ in Theorem \ref{thm:main} by the
initial conditions
\[
\left. \mu\right|_{t=0}=\m{}_,\quad
\left.p\right|_{t=0}
=p_0,\quad \left. q\right|_{t=0}=0
\]
The latter together with \eqref{eq:def_A} yield the initial
conditions for $A_{\pm}$:
\[
\begin{cases}
\left.\left(A_+^{2}+A_-^{2}\right)\right|_{t=0}=2\left|p_0\right|\\
\left.\left(A_+^{2}-A_-^{2}\right)\right|_{t=0}=2p_0\\
\left.A_+A_-\right|_{t=0}=0
\end{cases}
\quad \Longleftrightarrow \quad
\begin{cases}
p_0>0\\
\left.A_+\right|_{t=0}=\pm\sqrt{2p_0}\\
\left.A_-\right|_{t=0}=0
\end{cases}
\vee\quad
\begin{cases}
p_0<0\\
\left.A_+\right|_{t=0}=0\\
\left.A_-\right|_{t=0}=\pm\sqrt{-2p_0}
\end{cases}
\]
In what follows assume that $p_0>0$ and $\left.A_+\right|_{t=0}>0$.
Other cases can be treated similarly. Note that then $p_0=\sqrt{2E}$, where $E>0$ is the total energy of the harmonic oscillator, $H=H|_{t=0}=E$.

From \eqref{eq:theorem} we get the following linear system:
\begin{equation}
\label{eq:constants} \left\{
  \begin{array}{lll}
    \m{}_{23}^{1}=C_2p_0-C_4, & \m{}_{31}^{1}=C_3p_0-C_1, & \m{}_{12}^{1}=C_5\sqrt{2p_0}\\
    \m{}_{13}^{2}=C_2p_0+C_4, &
    \m{}_{12}^{2}=-C_6\sqrt{2p_0}, &
    \m{}_{23}^{2}=C_3p_0+C_1\\
    \m{}_{13}^{3}=C_7\sqrt{2p_0}, &
\m{}_{23}^{3}=-C_8\sqrt{2p_0}, & \m{}_{12}^{3}=C_9
\end{array}
\right.
\end{equation}

One can easily check that the unique solution of the latter system
with respect to $C_\nu$ ($\nu=1,\ldots,9$) is
\[
\left\{
  \begin{array}{lll}
C_1=\frac{1}{2}\left(\overset{\circ}{\mu}{}_{23}^{2}-\overset{\circ}{\mu}{}_{31}^{1}\right),&
C_2=\frac{1}{2p_0}\left(\overset{\circ}{\mu}{}_{13}^{2}+\overset{\circ}{\mu}{}_{23}^{1}\right),&
C_3=\frac{1}{2p_0}\left(\overset{\circ}{\mu}{}_{23}^{2}+\overset{\circ}{\mu}{}_{31}^{1}\right)\vspace{1mm}\\
C_4=\frac{1}{2}\left(\overset{\circ}{\mu}{}_{13}^{2}-\overset{\circ}{\mu}{}_{23}^{1}\right),&
C_5=\frac{1}{\sqrt{2p_0}}\overset{\circ}{\mu}{}_{12}^{1},&
C_6=-\frac{1}{\sqrt{2p_0}}\overset{\circ}{\mu}{}_{12}^{2}\vspace{1mm}\\
C_7=\frac{1}{\sqrt{2p_0}}\overset{\circ}{\mu}{}_{13}^{3},&
C_8=-\frac{1}{\sqrt{2p_0}}\overset{\circ}{\mu}{}_{23}^{3},&
C_9=\overset{\circ}{\mu}{}_{12}^{3}
\end{array}
\right.
\]

\begin{rem}
Note that the parameters $C_{\nu}$  have to satisfy condition \eqref{eq:cond} to get the operadic Lax representations.
\end{rem}

\begin{defn}
If $\mu=\overset{\circ}{\mu}$, then the multiplication $\overset{\circ}{\mu}$ is called \emph{dynamically rigid}. 
If $\mu\neq\overset{\circ}{\mu}$, then the multiplication $\mu$ is called a \emph{dynamical deformation of $\overset{\circ}{\mu}$} (over the harmonic oscillator).

\end{defn}

\section{Bianchi classification of 3d real Lie algebras}

We use the Bianchi classification of the 3D real Lie algebras given in
\cite{Landau80}.
The structure equations of the 3D real Lie algebras can be presented
as follows:
\[
[e_1,e_2]=-\alpha e_2+n^{3}e_3,\quad
[e_2,e_3]=n^{1}e_1,\quad
[e_3,e_1]=n^{2}e_2+\alpha e_3
\]
The values of the parameters $\alpha,n^{1}, n^{2},n^{3}$  and the corresponding structure constants are presented in Table \ref{table:Bianchi1}.
\begin{table}[ht]
\begin{center}
\begin{tabular}{|c||c||c|c|c||c|c|c|c|c|c|c|c|c|c|c|}\hline
Bianchi type & $\alpha$ & $n^{1}$ & $n^{2}$ & $n^{3}$ &
$\overset{\circ}{\mu}{}_{12}^{1}$ &
$\overset{\circ}{\mu}{}_{12}^{2}$ &
$\overset{\circ}{\mu}{}_{12}^{3}$ &
 $\overset{\circ}{\mu}{}_{23}^{1}$ & $\overset{\circ}{\mu}{}_{23}^{2}$ & $\overset{\circ}{\mu}{}_{23}^{3}$
  & $\overset{\circ}{\mu}{}_{31}^{1}$ & $\overset{\circ}{\mu}{}_{31}^{2}$ &
  $\overset{\circ}{\mu}{}_{31}^{3}$\\\hline\hline
 I & 0 & 0 & 0 & 0 & 0 & 0 & 0 & 0 & 0 & 0 & 0 & 0 & 0
\\\hline
II & 0 & $1$ & 0 & 0 & 0 & 0 & 0 & $1$ & 0 & 0 & 0 & 0 & 0
\\\hline
VII & 0 & $1$ & $1$ & 0 & 0 & 0 & 0 & $1$ & 0 & 0 & 0 & $1$ & 0
\\\hline
VI & 0 & $1$ & $-1$ & 0 & 0 & 0 & 0 & $1$ & 0 & 0 & 0 & $-1$ & 0
\\\hline
IX & 0 & $1$ & $1$ & $1$ & 0 & 0 & $1$ & $1$ & 0 & 0 & 0 & $1$ & 0
\\\hline
VIII & 0 & $1$ & $1$ & $-1$ & 0 & 0 & $-1$ & $1$ & 0 & 0 & 0 & $1$ &
0
\\\hline
V & 1 & 0 & 0 & 0 & 0 & $-1$ & 0 & 0 & 0 & 0 & 0 & 0 & $1$
\\\hline
IV & 1 & 0 & 0 & 1 & 0 & $-1$ & $1$ & 0 & 0 & 0 & 0 & 0 & $1$
\\\hline
VII$_{a}$ & $a$ & 0 & $1$ & $1$ & 0 & $-a$ & $1$ & 0 & 0 & 0 &
0 & $1$ & $a$
\\\hline
III$_{a=1}$& 1 & 0 & $1$ & $-1$ & 0 & $-1$ & $-1$ & 0 & 0 & 0 & 0 &
$1$ & $1$
\\\hline
VI$_{a\neq 1}$& $a$ & 0 & $1$ & $-1$ & 0 & $-a$ & $-1$ & 0 & 0 & 0 &
0 & $1$ & $a$
\\\hline
\end{tabular}
\end{center}
\caption{3d real Lie algebras in Bianchi classification. Here $a>0$.}
\label{table:Bianchi1}
\end{table}

The Bianchi classification is for instance used to describe the spatially homogeneous spacetimes of dimension 3+1.
In particular, the Lie algebra VII$_a$ is very interesting for the cosmological applications, because it is related to the Friedmann-Robertson-Walker metric.
One can find more details in Refs. \cite{Landau80,GH}. 

\section{Dynamical deformations of 3d real Lie algebras}

By using the structure constants of the 3-dimensional Lie algebras
in the Bianchi classification, Theorem \ref{thm:main} and relations
\eqref{eq:constants} we can find evolution of these algebras generated by the harmonic oscillator (see Table \ref{table:Bianchi3}).

\begin{table}[!h]
\begin{center}\setlength\extrarowheight{4pt}
\begin{tabular}{|c||c|c|c|c|c|c|c|c|c|c|c|}\hline
Deformed Bianchi type & $\mu_{12}^{1}$ & $\mu_{12}^{2}$ &
$\mu_{12}^{3}$ & $\mu_{23}^{1}$ & $\mu_{23}^{2}$ & $\mu_{23}^{3}$ &
$\mu_{31}^{1}$ & $\mu_{31}^{2}$ &  $\mu_{31}^{3}$
\\[1.5ex]\hline\hline
I$^{t}$ & 0 & 0 & 0 & 0 & 0 & 0 & 0 & 0 & 0
\\ [1.5ex] \hline
II$^{t}$ & 0 & 0 & 0 & $\frac{p+p_0}{2p_0}$ & $\frac{\omega
q}{2p_0}$ & 0 & $\frac{\omega q}{2p_0}$ & $\frac{p-p_0}{-2p_0}$ & 0
\\ [1.5ex] \hline
VII$^{t}$ & 0 & 0 & 0 & $1$ & 0 & 0 & 0 & $1$ & 0
\\ [1.5ex] \hline
VI$^{t}$ & 0 & 0 & 0 & $\frac{p}{p_0}$& $\frac{\omega q}{p_0}$ & 0 &
$\frac{\omega q}{p_0}$ & $-\frac{p}{p_0}$ & 0
\\ [1.5ex] \hline
IX$^{t}$ & 0 & 0 & $1$ & $1$ & 0 & 0 & 0 & $1$ & 0
\\ [1.5ex] \hline
VIII$^{t}$ & 0 & 0 & $-1$ & $1$ & 0 & 0 & 0 & $1$ & 0
\\ [1.5ex] \hline
V$^{t}$ & $\frac{A_-}{\sqrt{2p_0}}$ & $\frac{-A_+}{\sqrt{2p_0}}$ & 0
& 0 & 0 & $\frac{-A_-}{\sqrt{2p_0}}$ & 0 & 0 &
$\frac{A_+}{\sqrt{2p_0}}$
\\ [1.5ex] \hline
IV$^{t}$ & $\frac{A_-}{\sqrt{2p_0}}$ & $\frac{-A_+}{\sqrt{2p_0}}$ &
$1$ & 0 & 0 & $\frac{-A_-}{\sqrt{2p_0}}$ & 0 & 0 &
$\frac{A_+}{\sqrt{2p_0}}$
\\ [1.5ex] \hline
VII$^{t}_a$ & $\frac{aA_-}{\sqrt{2p_0}}$ &
$\frac{-aA_+}{\sqrt{2p_0}}$ & $1$ & $\frac{p-p_0}{-2p_0}$ &
$\frac{\omega q}{-2p_0}$ & $\frac{-aA_-}{\sqrt{2p_0}}$ &
$\frac{\omega q}{-2p_0}$ & $\frac{p+p_0}{2p_0}$ &
$\frac{aA_+}{\sqrt{2p_0}}$
\\ [1.5ex] \hline
III$_{a=1}^{t}$ & $\frac{A_-}{\sqrt{2p_0}}$ &
$\frac{-A_+}{\sqrt{2p_0}}$ & $-1$ & $\frac{p-p_0}{-2p_0}$ &
$\frac{\omega q}{-2p_0}$ & $\frac{-A_-}{\sqrt{2p_0}}$ & $\frac{\omega
q}{-2p_0}$ & $\frac{p+p_0}{2p_0}$ & $\frac{A_+}{\sqrt{2p_0}}$
\\ [1.5ex] \hline
VI$_{a\neq1}^{t}$ & $\frac{aA_-}{\sqrt{2p_0}}$ &
$\frac{-aA_+}{\sqrt{2p_0}}$ & $-1$ & $\frac{p-p_0}{-2p_0}$ &
$\frac{\omega q}{-2p_0}$ & $\frac{-aA_-}{\sqrt{2p_0}}$ &
$\frac{\omega q}{-2p_0}$ & $\frac{p+p_0}{2p_0}$ &
$\frac{aA_+}{\sqrt{2p_0}}$
\\ [1.5ex] \hline
\end{tabular}
\end{center}
\caption{Evolution of 3d real Lie algebras. Here $p_0=\sqrt{2E}$.}
\label{table:Bianchi3}
\end{table}

\begin{thm}[dynamically rigid algebras]
The algebras I, VII, VIII, and IX are dynamically rigid over the harmonic oscillator.
\end{thm}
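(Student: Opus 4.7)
The plan is to compute, for each of the four Bianchi types I, VII, VIII, IX, the deformed multiplication $\mu(q,p)$ produced by Theorem \ref{thm:main} from the initial data of Table \ref{table:Bianchi1}, and to verify that this $\mu(q,p)$ coincides identically with $\overset{\circ}{\mu}$. The key structural observation is that for exactly these four Bianchi types the initial constants satisfy
\[
\overset{\circ}{\mu}{}_{31}^{1}=\overset{\circ}{\mu}{}_{23}^{2}=\overset{\circ}{\mu}{}_{12}^{1}=\overset{\circ}{\mu}{}_{12}^{2}=\overset{\circ}{\mu}{}_{13}^{3}=\overset{\circ}{\mu}{}_{23}^{3}=0, \qquad \overset{\circ}{\mu}{}_{13}^{2}+\overset{\circ}{\mu}{}_{23}^{1}=0,
\]
while the only possibly nonzero entries are $\overset{\circ}{\mu}{}_{23}^{1}\in\{0,1\}$ and $\overset{\circ}{\mu}{}_{12}^{3}\in\{0,1,-1\}$.

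First I would substitute these identities into the formulas for $C_{\nu}$ displayed just after \eqref{eq:constants}. Each of $C_{1},C_{2},C_{3},C_{5},C_{6},C_{7},C_{8}$ is a linear combination of components that vanish (or of a sum that vanishes), so all seven reduce to zero; only $C_{4}=-\overset{\circ}{\mu}{}_{23}^{1}$ and $C_{9}=\overset{\circ}{\mu}{}_{12}^{3}$ survive. Next I would plug back into \eqref{eq:theorem} and observe that every right-hand side in that system which actually depends on $(q,p)$ through $p$, $\omega q$, or $A_{\pm}$ carries one of the seven vanished coefficients; consequently the only nontrivial entries of $\mu(q,p)$ are $\mu_{23}^{1}=-C_{4}=\overset{\circ}{\mu}{}_{23}^{1}$, $\mu_{13}^{2}=C_{4}=\overset{\circ}{\mu}{}_{13}^{2}$, and $\mu_{12}^{3}=C_{9}=\overset{\circ}{\mu}{}_{12}^{3}$, together with their anti-symmetric partners. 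This is precisely $\overset{\circ}{\mu}$, giving dynamical rigidity.

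The one mild subtlety I would flag is that since $C_{2}=C_{3}=C_{5}=C_{6}=C_{7}=C_{8}=0$ the non-triviality condition \eqref{eq:cond} fails, so Theorem \ref{thm:main} is not literally applicable to these four inputs. To close this gap I would verify directly that $(\overset{\circ}{\mu},M)$ is an operadic Lax pair, which, since $\dot{\overset{\circ}{\mu}}=0$, amounts to checking the algebraic identity $[M,\overset{\circ}{\mu}]=0$. Using the component formula $\dot{\mu}{}_{jk}^{i}=\mu_{jk}^{s}M_{s}^{i}-M_{j}^{s}\mu_{sk}^{i}-M_{k}^{s}\mu_{js}^{i}$ from Section \ref{sec:evolution} and the explicit $M$ from Lemma \ref{lemma:harmonic3}, this is a finite case-by-case verification; it parallels reading off the rows I$\,^{t}$, VII$\,^{t}$, VIII$\,^{t}$, IX$\,^{t}$ of Table \ref{table:Bianchi3}, whose entries contain no $q,p,A_{\pm}$ and agree with the corresponding rows of Table \ref{table:Bianchi1}. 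The hard part is essentially bookkeeping across the four cases rather than any conceptual difficulty.
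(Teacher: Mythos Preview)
Your argument is correct and follows exactly the route the paper intends: the paper's one-line proof (``evident from Tables \ref{table:Bianchi1} and \ref{table:Bianchi3}'') is just the assertion that the computation you spell out---solving \eqref{eq:constants} for the $C_\nu$, finding $C_2=C_3=C_5=C_6=C_7=C_8=0$, and reading off \eqref{eq:theorem}---returns $\mu=\overset{\circ}{\mu}$ for these four types. Your explicit identification of the structural reason (the vanishing of $\overset{\circ}{\mu}{}_{31}^{1},\overset{\circ}{\mu}{}_{23}^{2},\overset{\circ}{\mu}{}_{12}^{1},\overset{\circ}{\mu}{}_{12}^{2},\overset{\circ}{\mu}{}_{13}^{3},\overset{\circ}{\mu}{}_{23}^{3}$ together with $\overset{\circ}{\mu}{}_{13}^{2}+\overset{\circ}{\mu}{}_{23}^{1}=0$) is a clean way to do all four cases at once.

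The subtlety you flag about condition \eqref{eq:cond} is a genuine point of rigor that the paper simply passes over: since $C_2=\cdots=C_8=0$ here, Theorem \ref{thm:main} as stated does not apply, and one should indeed check directly that $[M,\overset{\circ}{\mu}]=0$ (equivalently, that the constant $\mu$ given by \eqref{eq:theorem} still satisfies the operadic Lax equation). This is an easy verification, as you say, but it is not addressed in the paper's proof; your proposal is in this respect more careful than the original.
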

\begin{proof}[Proof]
This is evident from Tables \ref{table:Bianchi1} and \ref{table:Bianchi3}.
\end{proof}
\begin{thm}[dynamical Lie algebras]
\label{thm:lie}
The algebras II$\,^{t}$, IV$\,^{t}$, V$\,^{t}$, VI$\,^{t}$, 
III$_{a=1}^{\,t}$, VI$_{a\neq1}^{\,t}$, and VII$^{\,t}_a$ are Lie algebras.
\end{thm}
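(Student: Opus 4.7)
The plan is to verify the Jacobi identity algebra by algebra directly from Table~\ref{table:Bianchi3}. Since Theorem~\ref{thm:main} already guarantees that every $\mu$ in the list is anti-commutative, being a Lie algebra is equivalent to the single cyclic relation
\[
\mathcal{J}^i \= \mu^s_{12}\mu^i_{s3} + \mu^s_{23}\mu^i_{s1} + \mu^s_{31}\mu^i_{s2} = 0, \qquad i = 1,2,3.
\]
For each algebra this is three scalar equations in the phase-space variables $q,p$ (and the derived $A_\pm$), so the whole theorem reduces to $7\times 3 = 21$ elementary identities.

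For the first block II$^{t}$, IV$^{t}$, V$^{t}$, VI$^{t}$, at least one of the brackets $[e_1,e_2]$, $[e_2,e_3]$, $[e_3,e_1]$ vanishes completely; in IV$^{t}$ the only nonzero coordinate $\mu^i_{12}$ is $\mu^3_{12}=1$, which couples against components that are zero in Table~\ref{table:Bianchi3}. A direct substitution then collapses each $\mathcal{J}^i$ identically to zero on all of phase space, with no constraint on $(q,p)$ required. These are the cases referred to in the introduction as leading ``straightforwardly to the Jacobi identity''.

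For VII$^{t}_a$, III$^{t}_{a=1}$ and VI$^{t}_{a\neq 1}$ all three brackets contribute and the sums $\mathcal{J}^i$ no longer vanish as polynomial identities in $q,p,A_\pm$. Here I would use the defining relations \eqref{eq:def_A} to rewrite every product $A_+^r A_-^s$ in terms of $p$, $\omega q$ and $A_+^2+A_-^2$. Energy conservation of the harmonic oscillator,
\[
p^2 + \omega^2 q^2 = 2H = 2E = p_0^2, \qquad A_+^2 + A_-^2 = 2\sqrt{2H} = 2p_0,
\]
then makes each $\mathcal{J}^i$ vanish along the Hamiltonian flow; this is the promised link between energy conservation and the Jacobi identity.

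The main obstacle is the bookkeeping for VII$^{t}_a$. It is the most general row in Table~\ref{table:Bianchi3}, mixing the rational coefficients $(p\pm p_0)/(2p_0)$, $\pm\omega q/(2p_0)$ with the $A_\pm/\sqrt{2p_0}$ coefficients and carrying the extra parameter $a$; collecting all terms in each $\mathcal{J}^i$ and recognising the result as a multiple of $p^2+\omega^2 q^2-p_0^2$ (equivalently $A_+^2+A_-^2-2p_0$) is the computational heart of the proof. Once the VII$^{t}_a$ identity is established, the cases III$^{t}_{a=1}$ and VI$^{t}_{a\neq 1}$, which differ from VII$^{t}_a$ only by the sign of $\mu^3_{12}$ and by the value of $a$, follow by the same reduction with purely sign-level bookkeeping.
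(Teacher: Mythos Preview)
Your plan is essentially the paper's proof: the first four algebras are handled by direct substitution, and for VII$^{\,t}_a$, III$^{\,t}_{a=1}$, VI$^{\,t}_{a\neq1}$ the Jacobiator is shown to vanish exactly via the relations \eqref{eq:def_A} together with energy conservation $A_+^2+A_-^2=2p_0$. The paper in fact records the explicit closed form
\[
J^{1}_t=-\tfrac{a(x,y,z)}{\sqrt{2p_0^{3}}}\bigl[A_-\omega q +A_+(p-p_0)\bigr],\qquad
J^{2}_t=-\tfrac{a(x,y,z)}{\sqrt{2p_0^{3}}}\bigl[A_+\omega q -A_-(p+p_0)\bigr],\qquad
J^{3}_t=0,
\]
and then uses \eqref{eq:def_A} to rewrite the bracketed quantity in $J^1_t$ as $A_+(\sqrt{2H}-p_0)=\tfrac{A_+}{2}(A_+^2+A_-^2-2p_0)$, which is precisely the factor you anticipated.

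One small inaccuracy in your heuristic for the first block: it is \emph{not} true that in each of IV$^{\,t}$ and V$^{\,t}$ one of $[e_1,e_2]$, $[e_2,e_3]$, $[e_3,e_1]$ vanishes completely; from Table~\ref{table:Bianchi3} all three are nonzero in both cases. The identical vanishing of $\mathcal{J}^i$ there comes instead from the fact that each bracket lands in directions that the next bracket in the cyclic sum annihilates. Your actual strategy (just substitute and check) is unaffected and is exactly what the paper does.
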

\begin{proof}
Denoting $\mu:=[\cdot,\cdot]_t$, one has to calculate the Jacobiator defined by
\begin{align*}
J_t(x;y;z)
&:=[x,[y,z]_t]_t+[y,[z,x]_t]_t+[z,[x,y]_t]_t \\
&\,\,=J^{1}_t(x;y;z)e_1+J^{2}_t(x;y;z)e_2+J^{3}_t(x;y;z)e_3
\end{align*}
At first, by direct calculations one can check that for the algebras
II$^{t}$, IV$^{t}$, V$^{t}$, VI$^{t}$ the Jacobiator identically vanishes, i.e $J_t = 0$.

Denote the scalar triple product of the vectors $x,y,z$ by
\[
(x,y,z)\=
 \begin{vmatrix}
 x^{1} & x^{2} & x^{3} \\
 y^{1} & y^{2} & y^{3} \\
z^{1} & z^{2} & z^{3} \\
\end{vmatrix}
\]
Then, by direct calculations one can check that for the algebras VI$_{a\neq1}^{\,t}$ and  VII$^{\,t}_a$ the Jacobiator coordinates are
\begin{equation}
\label{eq:jacobi}
\left\{
    \begin{array}{ll}
      J^{1}_t(x;y;z)=-\frac{a(x,y,z)}{\sqrt{2p_0^{3}}}\left[A_-\omega q +A_+(p-p_0)\right]\vspace{1mm}\\
      J^{2}_t(x;y;z)=-\frac{a(x,y,z)}{\sqrt{2p_0^{3}}}\left[A_+\omega q - A_-(p+p_0)\right]\vspace{1mm}\\
      J^{3}_t(x;y;z)=0
    \end{array}
  \right.
\end{equation}
and for the algebra III$_{a=1}^{\,t}$ one has the same formulae with $a=1$.

Now, by using relations \eqref{eq:def_A} calculate:
\begin{align*}
 A_-\omega q +A_+(p-p_0)
 &=A_+A^{2}_-+A_+(p-p_0)\\
 &=A_+(A^{2}_- + p -p_0)\\
 &=A_+\left(A^{2}_- + \frac{1}{2}A^{2}_+ - \frac{1}{2}A^{2}_- - p_0 \right)\\
 &=A_+\left(\frac{1}{2}A^{2}_+ + \frac{1}{2}A^{2}_- - p_0 \right)\\
 &=A_+(\sqrt{2H}-p_0)\\
 &\hskip-2mm \overset{H=E}{=}A_+(\sqrt{2E}-p_0)\\
 &=A_+ 0\\
 &= 0
 \end{align*}
Here we used the fact that the Hamiltonian $H$ is a \emph{conserved} observable, i.e
\[
H=H|_{t=0}=E=p^{2}_0/2
\]
Thus, we have proved that $J^{1}_t=0$.
In the same way one can check that $J^{2}_t=0$.
\end{proof}

\section{Energy conservation from Jacobi identities}

When proving Theorem \ref{thm:lie} we observed how the conservation
of energy $H=E$ implies the Jacobi identities $J^{1}_t=J^{2}_t=0$ of
the dynamically deformed algebras. Now let us show \emph{vice versa}, i.e

\begin{thm}
\label{thm:H=E}
The Jacobi identity $J_t=0$ implies conservation of energy $H=E$.
\end{thm}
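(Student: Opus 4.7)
The plan is to reverse the chain of computations in the proof of Theorem \ref{thm:lie}. The statement is vacuous for the algebras whose Jacobiator vanishes identically on structural grounds (I$^t$, II$^t$, IV$^t$, V$^t$, VI$^t$, VII$^t$, VIII$^t$, IX$^t$), so the content of the theorem concerns the three algebras VII$^{\,t}_a$, III$_{a=1}^{\,t}$, VI$_{a\neq 1}^{\,t}$ for which \eqref{eq:jacobi} applies. I would fix any one of these (the argument is the same for all three, since $a\neq 0$ in each case) and evaluate $J_t$ on a triple with non-zero scalar triple product, e.g.\ $(x,y,z)=(e_1,e_2,e_3)$, so that the prefactor $a(x,y,z)/\sqrt{2p_0^{3}}$ in $J^{1}_t$ and $J^{2}_t$ is non-zero.

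Once this prefactor is stripped off, the assumption $J^{1}_t=J^{2}_t=0$ becomes the pair
\[
A_-\omega q + A_+(p-p_0)=0,\qquad A_+\omega q - A_-(p+p_0)=0.
\]
Using $\omega q = A_+A_-$ from \eqref{eq:def_A}, I would factor these as
\[
A_+\bigl(A_-^{2}+p-p_0\bigr)=0,\qquad A_-\bigl(A_+^{2}-p-p_0\bigr)=0.
\]
Combining the first two relations in \eqref{eq:def_A} gives $A_+^{2}=\sqrt{2H}+p$ and $A_-^{2}=\sqrt{2H}-p$, so both brackets collapse to the common factor $\sqrt{2H}-p_0$, reducing the system to
\[
A_+\bigl(\sqrt{2H}-p_0\bigr)=0,\qquad A_-\bigl(\sqrt{2H}-p_0\bigr)=0.
\]

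The final step is to conclude that the common factor, rather than $A_\pm$, must vanish. Simultaneous vanishing of $A_+$ and $A_-$ would force $\omega q = A_+A_- = 0$ and $p=\tfrac{1}{2}(A_+^{2}-A_-^{2})=0$, hence $H=0$; but the initial-condition convention of Section~6 gives $A_+|_{t=0}=\sqrt{2p_0}>0$, so this possibility is excluded near $t=0$ and (by continuity of the trajectory) throughout a neighbourhood. Therefore $\sqrt{2H}=p_0=\sqrt{2E}$, i.e.\ $H=E$, as desired. The main subtlety, and the only step that is not pure algebraic manipulation, is precisely this last one; everything else is a direct back-substitution of the identities in \eqref{eq:def_A} into the Jacobiator formulas in \eqref{eq:jacobi}.
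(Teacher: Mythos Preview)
Your argument is correct and arrives at the same conclusion, but by a different computational route than the paper. The paper does \emph{not} reverse the calculation of Theorem~\ref{thm:lie}; instead it treats the conditions $J^{1}_t=J^{2}_t=0$ as the linear system
\[
\begin{cases}
A_-\,\omega q + A_+\,p = A_+ p_0,\\
A_+\,\omega q - A_-\,p = A_- p_0,
\end{cases}
\]
in the unknowns $\omega q$ and $p$, solves it by Cramer's rule (the determinant being $-(A_+^{2}+A_-^{2})=-2\sqrt{2H}\neq 0$), and then compares the resulting expressions $\omega q=\omega q\,p_0/\sqrt{2H}$ and $p=p\,p_0/\sqrt{2H}$ with their tautological values to read off $p_0/\sqrt{2H}=1$. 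Your approach---substituting $\omega q=A_+A_-$ at the outset and factoring out $A_\pm$---is literally the reversal of the manipulations in the proof of Theorem~\ref{thm:lie}, which makes the equivalence $J_t=0\Leftrightarrow H=E$ very transparent, whereas the paper's Cramer-rule argument is a bit more self-contained and does not rely on recognising that earlier computation.

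One small point on your final step: the phrasing ``near $t=0$ \ldots\ throughout a neighbourhood'' only yields the conclusion locally. The paper argues pointwise: at each instant $t$, $q(t)$ and $p(t)$ cannot vanish simultaneously (this would contradict $p_0>0$), so at every $t$ at least one of the two relations forces $\sqrt{2H(t)}=p_0$. Exactly the same pointwise reasoning applies to your factored equations $A_+(\sqrt{2H}-p_0)=0$ and $A_-(\sqrt{2H}-p_0)=0$, since $A_+(t)=A_-(t)=0$ is equivalent to $q(t)=p(t)=0$; phrasing it this way gives $H=E$ for all $t$, not just near the initial time.
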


\begin{proof}
By setting in \eqref{eq:jacobi} $J^{1}_t=J^{2}_t=0$, we obtain the following system:
\begin{equation}
\label{eq_pq}
\begin{cases}
A_-\omega q+A_+p=A_+p_0\\
A_+\omega q-A_-p=A_-p_0
\end{cases}
\end{equation}
Now use the the defining relations \eqref{eq:def_A} of $A_{\pm}$ and
the Cramer formulae to express the canonical variables $q,p$ via
$A_\pm$. First calculate
\begin{align*}
\Delta
&\=
\begin{vmatrix}
A_-&A_+\\
A_+&-A_-
\end{vmatrix}
=-A^{2}_- - A^{2}_+
=-2\sqrt{2H}\neq0
\\
\Delta_{\omega q}
&\=
\begin{vmatrix}
A_+ p_0 & A_+\\
A_- p_0 &-A_-
\end{vmatrix}
=-2A_+A_- p_0
= -2\omega q p_0
\\
\Delta_p
&\=
\begin{vmatrix}
A_- & A_+ p_0\\
A_+ &A_- p_0
\end{vmatrix}
=A^{2}_- p_0-A^{2}_+ p_0
= -2p p_0
\end{align*}
Thus we have
\begin{align*}
\omega q
&= \frac{\Delta_{\omega q}}{\Delta}=-\frac{2\omega q p_0}{-2\sqrt{2H}}
\hskip12.7mm \quad\Longrightarrow\quad
 \frac{p_0}{\sqrt{2H}}=1
\quad\Longrightarrow\quad
H=p^{2}_0/2=E
\\
p
&= \frac{\Delta_{p}}{\Delta}
=-\frac{2 p p_0}{-2\sqrt{2H}}
=\frac{p p_0}{\sqrt{2H}}
\quad\Longrightarrow\quad
 \frac{p_0}{\sqrt{2H}}=1
 \quad\Longrightarrow\quad
H=p^{2}_0/2=E
\end{align*}
Actually, the last implications are possible only at the time moments when $q\neq0$ and  $p\neq0$, respectively. But $q$ and $p$ can not be simultaneously zero, thus really $H=E$ for all $t$.
\end{proof}

\section{Concluding remarks}

In the present paper, the operadic Lax representations for the  harmonic oscillator were  used to construct the dynamical deformations of 3d real Lie algebras in the Bianchi classification.
It was shown that the energy conservation of the harmonic oscillator is related to the Jacobi identities of the dynamically deformed algebras.
Based on this observation, it was proved that the dynamical deformations of 3D real Lie algebras in the Bianchi classification over the harmonic oscillator are Lie algebras.

It turned out that four Lie algebras (I,VII,IX,VIII) remain undeformed (rigid) and all other ones are deformed. However, it is interesting to note that these remain to be Lie algebras over canonical variables $q,p$.  Namely, four of them 
(II$\,^{t}$, IV$\,^{t}$, V$\,^{t}$, VI$\,^{t}$) lead straightforwardly to the Jacobi identity, while in other cases 
(VII$^{\,t}_a$, III$_{a=1}^{\,t}$, VI$_{a\neq1}^{\,t}$) satisfy the Jacobi identity with  the energy conservation law. 

\section*{Acknowledgements}

The research was in part supported by the Estonian Science Foundation, Grant No. ETF-6912. The authors are grateful to S. Hervik and P. Kuusk for discussions about using the Bianchi  classification  in cosmology and to the referee for useful comments.

\medskip
\noindent
Department of Mathematics, Tallinn University of Technology\\
Ehitajate tee 5, 19086 Tallinn, Estonia\\
E-mails: eugen.paal@ttu.ee and jvirkepu@staff.ttu.ee

\end{document}